\documentclass[a4paper,11pt]{article}
\usepackage[utf8]{inputenc}
\usepackage[T1]{fontenc}
\usepackage{lmodern}
\usepackage{microtype}
\usepackage[a4paper,textwidth=16cm,textheight=22cm,centering]{geometry}

\usepackage{mathtools}
\usepackage{amssymb,amsfonts}
\usepackage{amsthm}
\usepackage{amsmath}
\usepackage{setspace}
\usepackage{indentfirst}

\usepackage{cite}
\usepackage{xcolor}
\usepackage[ ]{hyperref}
\hypersetup{
	colorlinks=true,   
	linktoc=all,        
	linkcolor=red,     
	citecolor=blue,} 

\theoremstyle{plain}
\newtheorem{theorem}{Theorem}[section]
\newtheorem{lemma}[theorem]{Lemma}

\newtheorem{corollary}[theorem]{Corollary}

\newtheorem{conjecture}[theorem]{Conjecture}

\theoremstyle{definition}

\theoremstyle{remark}

\begin{document}
	\date{}
	\begin{spacing}{1.03}
		\title{{The Erd\H{o}s--Hajnal hypergraph Ramsey problem for $r_4(6,n)$
        }}
\author{
Longma Du$^1$, \;\;\; Xinyu Hu$^2$, \;\;\; Ruilong Liu$^1$, \;\;\; Guanghui Wang$^1$}

\footnotetext[1]{School of Mathematics, Shandong University, Jinan 250100, P.~R.~China.
Emails: {\tt 202520303@mail.sdu.edu.cn} (L. Du),
{\tt liuruilong@mail.sdu.edu.cn} (R. Liu),
{\tt ghwang@sdu.edu.cn} (G. Wang).
Supported by the Natural Science Foundation of China (12231018) and State Key Laboratory of Cryptography and Digital Economy Security.}
\footnotetext[2]{Data Science Institute, Shandong University, Jinan 250100, P.~R.~China.
Email: {\tt huxinyu@sdu.edu.cn} (X. Hu).
Supported by National Postdoctoral Fellowship Program (C-tier) (GZC20252005) and NSFC (No. 12601677).}

\maketitle	

\begin{abstract}
The Ramsey number $r_k(s,n)$ is the smallest integer $N$ such that every $N$-vertex $k$-graph contains either a copy of $K_s^{(k)}$ or an independent set of size $n$. Erd\H{o}s and Hajnal conjectured that for every fixed $s>k\ge 4$, one has $r_k(s,n)\ge \operatorname{twr}_{k-1}(\Omega(n))$. This conjecture was independently verified by Mubayi and Suk, and by Conlon, Fox and Sudakov, for $k\ge4$ and $s\ge k+3$.
In this paper, we prove that $r_4(6,n)\ge 2^{2^{cn}}$ for some absolute constant $c>0$, improving upon our previous bound. Consequently, we confirm the Erd\H{o}s--Hajnal conjecture for $r_k(k+2,n)$ for all fixed $k\ge4$.
\end{abstract}

\section{Introduction}
A $k$-uniform hypergraph $H$ ($k$-graph for short) is a pair consisting of a set of vertices $V(H)$ and a collection of $k$-element subsets of $V(H)$. Let $K_n^{(k)}$ be the complete $k$-graph on $n$ vertices. The off-diagonal Ramsey number $r_k(s,n)$ \cite{R}, with $k$ and $s$ regarded as fixed parameters, is the smallest integer $N$ such that every $N$-vertex $k$-graph contains either a copy of $K_s^{(k)}$ or an independent
set of size $n$. 

The Ramsey numbers have been extensively studied since 1935 \cite{E-S-1}, with many classical results \cite{A-K-S-1,C-G-E,E-H-Con,E-R-2,K-1,L-R-Z,M-S-3,M-S-4,S-1,P-G-M,SP-1}.
Recent years have witnessed many breakthroughs, especially in graphs \cite{H-M-S,M-S-X,M-V-2,Bradac26,C-G-M-S,G-N-N-W}. Here we focus on the off-diagonal hypergraph setting.

For $3$-graphs, Conlon, Fox and Sudakov \cite{C-F-S-3} proved that for every $s\ge4$, $2^{\Omega(n\log n)}\le r_3(s,n)\le 2^{O(n^{s-2}\log n)}$, which improved the upper bound of Erd\H{o}s and Rado \cite{E-R-2} and the lower bound of Erd\H{o}s and Hajnal \cite{E-H-Con}.  
For $s>k\ge 4$, it is known that $r_k(s,n)\le\operatorname{twr}_{k-1} (n^{O(1)})$ \cite{E-R-2}, where the tower function $\operatorname{twr}_{k}(x)$ is defined by $\operatorname{twr}_{1}(x)=x$ and $\operatorname{twr}_{i+1}(x)=2^{\operatorname{twr}_{i}(x)}$. The Erd\H{o}s--Hajnal stepping-up lemma implies that $r_k(s,n)\ge \operatorname{twr}_{k-1} (\Omega(n))$ for $k\ge 4$ and for all $s\ge 2^{k-1}-k+3$. A fundamental conjecture about $r_k(s,n)$ was proposed by Erd\H{o}s and Hajnal \cite{E-H-Con}.
\begin{conjecture}[Erd\H{o}s and Hajnal \cite{E-H-Con}]\label{E-H-C}
For every fixed $s>k\ge4$, 
$r_k(s,n)\ge \operatorname{twr}_{k-1}(\Omega(n))$. 
\end{conjecture} 
Erd\H{o}s and Hajnal (see \cite{G-R-S-1}) 
showed that $r_4(7,n)\ge 2^{2^{\Omega(n)}}$. In \cite{C-F-S-1}, Conlon, Fox and Sudakov modified the Erd\H{o}s--Hajnal stepping-up lemma to show that Conjecture \ref{E-H-C} holds for all $s\ge \lceil\frac{5k}{2}\rceil-3$. Mubayi and Suk \cite{M-S-4}, as well as Conlon, Fox and Sudakov, independently verified Conjecture \ref{E-H-C} for $k\ge 4$ and $s\ge k+3$. Mubayi and Suk \cite{M-S-3} established the lower bounds $r_k(k+1,n)\ge \operatorname{twr}_{k-2} (n^{\Omega(\log n)})$ and $r_k(k+2,n)\ge \operatorname{twr}_{k-1} ({\Omega(n^{1/5})})$. Recently, the authors \cite{D-H-L-W-2} improved the lower bound for $r_k(k+2,n)$ to $\operatorname{twr}_{k-1}(\Omega(n^{1/2}))$ by proving $r_4(6,n)\ge 2^{2^{\Omega(n^{1/2})}}$. Subsequently, the authors \cite{D-H-L-W-3} showed that $r_k(k+1,n)\ge \operatorname{twr}_{k-1}(\Omega(n^{1/7}))$, thereby determining the tower growth rate. Shortly thereafter, Fan, Li, Lin, and Ning \cite{F-L-L-N}, as well as the authors, independently improved the lower bound to $r_k(k+1,n)\ge \operatorname{twr}_{k-1}(\Omega(n^{1/5}))$.
\smallskip

In this paper, we further improve the lower bound for $r_4(6,n)$.

\begin{theorem}\label{center-2}
     For all $n\ge 5$, $r_4(6,n)\ge 2^{2^{cn}}$, where $c>0$ is an absolute constant.
\end{theorem}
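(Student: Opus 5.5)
We prove Theorem~\ref{center-2} with a stepping-up construction in the style of Erd\H{o}s--Hajnal, as modified by Mubayi--Suk and Conlon--Fox--Sudakov. The coloring lives on $\{0,1\}^{N}$ with $N=2^{cn}$; it is built from an auxiliary structure on the index set $[N]$ that has ``no large homogeneous structure of order $n$''. The base object will be a $2$-coloring $\phi$ of the triples of $[N]$ (or of the pairs, together with an auxiliary ordering) such that:
\begin{itemize}
\item[(a)] $\phi$ has no monochromatic tight path (or ``monotone'' configuration) of length $\Omega(n)$ in the colour used for independent sets;
\item[(b)] $\phi$ avoids a small fixed forbidden pattern on $4$ or $5$ vertices in the other colour.
\end{itemize}
Such a $\phi$ with $N=2^{cn}$ comes from a random colouring of pairs on $2^{cn}$ points, or from the standard Erd\H{o}s--Hajnal $r_3(4,n)$-type construction stepped up from a graph.

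For vertices $a_1<\dots<a_4$ of $\{0,1\}^N$ (ordered lexicographically), set $\delta_i=\delta(a_i,a_{i+1})$, the first coordinate where they differ. We declare $\{a_1,\dots,a_4\}$ an edge according to the shape of $(\delta_1,\delta_2,\delta_3)$, as follows:
\begin{itemize}
\item if $\delta_1<\delta_2<\delta_3$ or $\delta_1>\delta_2>\delta_3$ (monotone), the edge is decided by $\phi(\delta_1,\delta_2,\delta_3)$;
\item for the non-monotone shapes, the rule is chosen so that any $6$ vertices spanning a $K_6^{(4)}$ would force a forbidden configuration in $\phi$.
\end{itemize}

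\textbf{Step 1 (no $K_6^{(4)}$).} Take $a_1<\dots<a_6$ with $\delta$-sequence $\delta_1,\dots,\delta_5$. Split into cases according to the position of the maximum $\delta_i$ and the local extremum pattern of the sequence. Each case should pin down a short monotone or zigzag sub-pattern, and hence a forbidden pattern for $\phi$ from (b). This is why the non-monotone rule must be designed carefully: with only $6$ vertices there is little room, and the earlier $n^{1/2}$ bound presumably lost exactly here.

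\textbf{Step 2 (no independent set of size $n$).} Let $S$ be an independent set of size $n$ and $\delta$-sequence $\delta_1,\dots,\delta_{n-1}$. Use the standard fact that the $\delta_i$ are distinct and that $\delta(a_i,a_j)=\min_{i\le \ell<j}\delta_\ell$. Extract either a long monotone run or many local extrema. A monotone run of length $\Omega(n)$ yields a long $\phi$-path in the independent colour, contradicting (a). Many local extrema (zigzags) must be killed by the non-monotone rule: we want independence of all zigzag $4$-tuples to force a linearly long structure in $\phi$ as well. This needs a linear, not square-root, extraction. The previous argument lost a square root by passing to a monotone subsequence via Erd\H{o}s--Szekeres. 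Instead, we would apply a greedy pairing of consecutive local maxima: the local maxima $\delta_{i_1},\delta_{i_2},\dots$ themselves form a $\delta$-sequence of a subset of $S$ of size about half. Iterating this or using a direct argument gives a structure of linear length in $\phi$.

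The main obstacle is Step 2's linear extraction, together with choosing the non-monotone rule so that Step 1 still holds. I would first try to define $\phi$ on pairs with a ``two-colour tight path'' condition, since then a path of length $\Omega(n)$ in the $\delta$-sequence follows linearly. I would then check Step 1 by exhaustive case analysis on the $16$ up/down patterns of $\delta_1,\dots,\delta_5$.
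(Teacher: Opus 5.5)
Your proposal describes the stepping-up framework but does not prove the statement. The one ingredient that has to be found, the edge rule for non-monotone $4$-tuples, is left as ``chosen so that any $6$ vertices spanning a $K_6^{(4)}$ would force a forbidden configuration.'' As a result, neither Step~1 nor Step~2 can be checked.

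You correctly identify the linear extraction in Step~2 as the crux, but the mechanism you suggest does not work. Passing to local extrema keeps about half the vertices; that alone yields no contradiction, and iterating it costs a factor of two per level. The missing idea is a non-monotone rule under which the $\delta$-sequence of an independent set has \emph{at most one local maximum}, so that no iteration is needed.

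In the paper's convention ($\delta(u,v)$ is the most significant differing bit, so $\delta(v_1,v_r)$ is the maximum of the consecutive values; in your first-differing-coordinate convention maxima and minima swap), the construction is as follows. Take $\phi$ to be a colouring of \emph{pairs} in which every $n$-set contains $a<b<c$ with $\phi(a,b)=\phi(b,c)=$ blue and $\phi(a,c)=$ red. A $4$-tuple is an edge iff one of the following holds:
\begin{itemize}
\item $\delta$ is monotone and $(\delta_1,\delta_2,\delta_3)$ is such a blue--blue--red triple;
\item $\delta_1>\delta_2<\delta_3$ and both $\phi(\delta_1,\delta_2)$ and $\phi(\delta_2,\delta_3)$ are blue;
\item $\delta_1<\delta_2>\delta_3$ and at least one of them is red.
\end{itemize}

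Suppose an independent set had consecutive local maxima $\delta_i,\delta_j$ with the local minimum $\delta_\ell$ between them. The non-edges $\{v_{i-1},v_i,v_\ell,v_{\ell+1}\}$ and $\{v_\ell,v_{\ell+1},v_{j+1},v_{j+2}\}$ have patterns $\delta_{i-1}<\delta_i>\delta_\ell$ and $\delta_\ell<\delta_j>\delta_{j+1}$. They force $\phi(\delta_i,\delta_\ell)=\phi(\delta_\ell,\delta_j)=$ blue, so $\{v_i,v_\ell,v_{\ell+1},v_{j+1}\}$, with pattern $\delta_i>\delta_\ell<\delta_j$, is an edge, a contradiction. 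Hence the sequence splits into at most four monotone runs. Each run has length less than $n$ by Property~V and the choice of $\phi$, which gives $\alpha(H)\le 4n-6$ directly. $K_6^{(4)}$-freeness is then a short case check on the position of the largest of $\delta_1,\dots,\delta_5$.

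There are also some smaller problems.
\begin{itemize}
\item The colour transfer above works only because $\phi$ is defined on pairs. With a colouring of triples (your first option), colours forced on $(\delta_{i-1},\delta_i,\delta_\ell)$ and $(\delta_\ell,\delta_j,\delta_{j+1})$ say nothing about $(\delta_i,\delta_\ell,\delta_j)$.
\item Condition (a) is misformulated. By Property~V a monotone run realizes every triple of its $\delta$'s, so what you need is that every $n$-set of indices contains an edge-type triple. Forbidding a long tight path in the non-edge colour is stronger than necessary, and it fails for the natural colouring: a monotone path of red pairs, which a random $\phi$ contains, is a long tight path of non-edge triples under the blue--blue--red rule.
\item The $\delta_i$ are not all distinct; only consecutive ones are (Property~I). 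For example, on $\{0,1,2,3\}$ one has $\delta(0,1)=\delta(2,3)=0$. Any step relying on global distinctness would need repair.
\end{itemize}
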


A variant of the classical Erd\H{o}s--Hajnal stepping-up lemma, due to Mubayi and Suk \cite[Lemma~2.1]{M-S-3}, states that for $k\ge 5$ and $n\ge s\ge k+1$, one has
\[
r_k(s,2kn) > 2^{\,r_{k-1}(s-1,n)-1}.
\]

Applying this lemma yields the following corollary, which establishes Conjecture~\ref{E-H-C} in the case $s=k+2$ for $k\ge 4$, leaving only the case $r_4(5,n)\ge 2^{2^{cn}}$ open.

\begin{corollary}\label{coro}
    For each $k\ge 4$ and all sufficiently large $n$,
$r_k(k+2,n)\ge \operatorname{twr}_{k-1}(cn)$,
where $c=c(k)>0$ is a constant.
\end{corollary}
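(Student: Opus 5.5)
The plan is induction on $k\ge 4$. Theorem~\ref{center-2} is the base case and the Mubayi--Suk stepping-up lemma quoted above is the inductive step. We take $s=k+2$ throughout. The inductive hypothesis for $k-1$ is then about $r_{k-1}((k-1)+2,n)=r_{k-1}(k+1,n)$, which is exactly the quantity that appears on the right-hand side of the stepping-up inequality
\[
r_k(k+2,2kn) > 2^{\,r_{k-1}(k+1,n)-1}.
\]

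\emph{Base case $k=4$.} Theorem~\ref{center-2} gives $r_4(6,n)\ge 2^{2^{c_4 n}}=\operatorname{twr}_3(c_4 n)$ for all $n\ge 5$ with an absolute constant $c_4>0$.

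\emph{Inductive step, $k\ge 5$.} Assume that $r_{k-1}(k+1,m)\ge \operatorname{twr}_{k-2}(c_{k-1}m)$ for all $m\ge m_{k-1}$. Given a large $N$, set $n=\lfloor N/(2k)\rfloor$, so that $2kn\le N$. Since $r_k(k+2,\cdot)$ is nondecreasing in its second argument (an independent set of size $N$ contains one of size $2kn$), we get
\[
r_k(k+2,N)\ge r_k(k+2,2kn).
\]
For $n\ge\max(k+2,m_{k-1})$, the hypothesis $n\ge s=k+2$ of the lemma holds, and applying the lemma followed by the inductive hypothesis yields
\[
r_k(k+2,N) > 2^{\,r_{k-1}(k+1,n)-1}\ge 2^{\operatorname{twr}_{k-2}(c_{k-1}n)-1}.
\]

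It remains to absorb the losses: the $-1$ in the exponent, the floor, and the factor $2k$. For $k-2\ge 2$ and large $n$ we have $\operatorname{twr}_{k-2}(c_{k-1}n)-1\ge \operatorname{twr}_{k-2}(c_{k-1}n/2)$, since the tower is at least exponential in its argument. Also $n\ge N/(4k)$ once $N\ge 4k$. Therefore
\[
r_k(k+2,N)\ge 2^{\operatorname{twr}_{k-2}(c_{k-1}N/(8k))}=\operatorname{twr}_{k-1}(c_k N)
\]
with $c_k=c_{k-1}/(8k)$, for all $N\ge m_k$, where $m_k$ is a suitable threshold depending only on $k$. This completes the induction.

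There is no real obstacle here, because the substance is Theorem~\ref{center-2}. The only care needed is bookkeeping. First, the lemma is stated only for $k\ge 5$ and $n\ge s$, which is why the threshold $m_k$ appears and why the corollary is stated only for ``sufficiently large $n$''. Second, the constant degrades multiplicatively with $k$, which is harmless since $c=c(k)$ is allowed.
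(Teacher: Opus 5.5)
Your proof is correct and follows the paper's own route: Theorem~\ref{center-2} serves as the base case $k=4$, and you iterate the Mubayi--Suk stepping-up lemma with $s=k+2$ (so $s-1=(k-1)+2$). Your bookkeeping spells out details the paper leaves implicit: monotonicity of $r_k(k+2,\cdot)$ in $n$, the floor, absorbing the $-1$ and the factor $2k$ into $c(k)$, and the threshold $n\ge s$.
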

\medskip

\section{Properties of the stepping-up technique}\label{sec2}

Fix a positive integer $D$ and let $V = \{0, 1, \ldots, 2^D - 1\}$. For each $v \in V$, write
$v = \sum_{i=0}^{D-1} v(i)2^i$, where $v(i) \in \{0, 1\}$ for every $i$.
For distinct $u,v\in V$, let
$\delta(u,v)$ denote the largest $i\in\{0,1,\ldots,D-1\}$ such that
$u(i)\neq v(i)$.

We use $\langle v_1, v_2,\ldots, v_r\rangle$ to denote an ordered vertex set with $v_1< v_2<\cdots<v_r$. For an ordered vertex set $S=\langle v_1, v_2,\ldots, v_r\rangle$, we also write
$\delta(S)=\delta(v_1,\ldots,v_r)
=(\delta(v_1,v_2),\ldots,\delta(v_{r-1},v_r))=(\delta_1,\ldots,\delta_{r-1})=(\delta_i)_{i=1}^{r-1}$, where $\delta_i=\delta(v_i,v_{i+1})$ for $i\in[r-1]$. 
For convenience, if inequalities are known between consecutive $\delta$-values, this will be indicated in the sequence by replacing the comma with the respective sign. For instance, assume that $S=\langle v_1,\ldots, v_5\rangle$ and $\delta_1< \delta_2 >\delta_3< \delta_4$. Then, since $\delta(v_1,v_2,v_3,v_4)=(\delta_1,\delta_2,\delta_3)$ satisfies $\delta_1< \delta_2 >\delta_3$, we write
$\delta(v_1,v_2,v_3,v_4)=(\delta_1<\delta_2>\delta_3)$. 
Similarly, when only some of the consecutive inequalities are known, as is the case for $\delta(v_1,v_2,v_4,v_5)$, we write $\delta(v_1,v_2,v_4,v_5)=(\delta_1<\delta_2~,~\delta_4)$.

For $2\le i\le r-2$, we say that $\delta_i$ is a \emph{local minimum} if
$\delta_{i-1}>\delta_i<\delta_{i+1}$, a \emph{local maximum} if $\delta_{i-1}<\delta_i>\delta_{i+1}$, and a \emph{local extremum} if it is either a local minimum or a local maximum. We call $\delta_i$ a \emph{local monotone} if $\delta_{i-1}<\delta_i<\delta_{i+1}$ or $\delta_{i-1}>\delta_i>\delta_{i+1}$. We say $\delta_1,\ldots,\delta_{r-1}$
form a monotone sequence if $\delta_1<\cdots<\delta_{r-1}$ (monotone increasing) or $\delta_1>\cdots>\delta_{r-1}$
(monotone decreasing), i.e., there is no local extremum. 

We have the following stepping-up properties (see \cite{G-R-S-1}).\medskip

\textbf{Property I.} For every triple $u < v < w$, $\delta(u,v) \neq \delta(v,w)$.
\medskip

Since $\delta_{i-1}\neq \delta_i$ for every $i$, every nonmonotone sequence $(\delta_i)_{i=1}^{r-1}$ has a local extremum.
\medskip

\textbf{Property II.} For $v_1 < \cdots < v_r$, $\delta(v_1,v_r) = \max_{1 \leq j \leq r-1} \delta(v_j,v_{j+1})$.
\medskip

We also have the following properties from Properties I and II (see \cite{F-H-L-L,H-L-L-W-1,M-S-3}).
\medskip

\textbf{Property III.} For $\delta(v_1,v_r) = \max_{1 \leq j \leq r-1} \delta(v_j,v_{j+1})$, there is a unique $\delta_i$ which achieves the maximum.
\medskip 

\textbf{Property IV.} For every 4-tuple $v_1 < \cdots < v_4$, if $\delta(v_1,v_2) > \delta(v_2,v_3)$, then $\delta(v_1,v_2) \neq \delta(v_3,v_4)$. Note that if $\delta(v_1,v_2) < \delta(v_2,v_3)$, it is possible that $\delta(v_1,v_2) = \delta(v_3,v_4)$.
\medskip

\textbf{Property V.} For $v_1 < \cdots < v_r$, suppose that $\delta_1,\ldots,\delta_{r-1}$ form a monotone sequence. Then for every subset of $k$ vertices $v_{i_1} < \cdots < v_{i_k}$, $\delta(v_{i_1},v_{i_2}),\delta(v_{i_2},v_{i_3}),\ldots,\delta(v_{i_{k-1}},v_{i_k})$ form a monotone sequence. Moreover, for every subset of $k-1$ such $\delta_j$'s, i.e. $\delta_{j_1},\delta_{j_2},\ldots,\delta_{j_{k-1}}$, there are $k$ vertices $v_{i_1},\ldots,v_{i_k}$ such that $\delta(v_{i_t},v_{i_{t+1}}) = \delta_{j_t}$.
\medskip

\section{The lower bound for $r_4(6,n)$}

In this section, we begin by considering a graph coloring with certain properties, which will later be used to define the edge set of a $4$-graph. We omit its proof, since it follows from a probabilistic argument analogous to that used in the proof of Lemma~3.1 in \cite{D-H-L-W-2,D-H-L-W-3}.

\begin{lemma}\label{phi}
There exists an absolute constant $c_0>0$ such that for every $n\ge5$ the following holds. There is a red/blue coloring $\phi$ of the pairs of $\{0,1,\ldots,\lfloor 2^{c_0n}\rfloor-1\}$ with the property that every $n$-set $A\subset \{0,1,\ldots,\lfloor 2^{c_0n}\rfloor-1\}$ contains a $3$-tuple $a_i<a_j<a_k$ satisfying $$\phi(a_i,a_j)=\phi(a_j,a_k)=\text{blue}~\text{and}~ \phi(a_i,a_k)=\text{red}.$$
\end{lemma}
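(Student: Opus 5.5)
We prove Lemma~\ref{phi} by a random coloring followed by a union bound. Let $N=\lfloor 2^{c_0 n}\rfloor$ with $c_0$ to be fixed. Colour each pair of $\{0,\ldots,N-1\}$ independently, blue with probability $p$ and red with probability $1-p$. We will take $p$ to be a fixed constant, say $p=1/2$; we can tune this later if needed. Call an $n$-set $A$ \emph{bad} if it contains no triple $a_i<a_j<a_k$ with $\phi(a_i,a_j)=\phi(a_j,a_k)=$ blue and $\phi(a_i,a_k)=$ red. It suffices to show that
\[
\mathbb{P}(A \text{ is bad}) \le 2^{-c_1 n^2}
\]
for some absolute $c_1>0$. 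Then the expected number of bad sets is at most $\binom{N}{n}2^{-c_1n^2}\le 2^{c_0n^2-c_1n^2}<1$ once $c_0<c_1$, so some colouring has no bad set.

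To bound the probability that a fixed $A$ is bad, split $A=\{a_1<\cdots<a_n\}$ into three consecutive blocks $X<Y<Z$ of size about $n/3$. Consider the triples $(x,y,z)\in X\times Y\times Z$; each is automatically ordered $x<y<z$. We want many disjoint independent trials, so we use edge-disjoint triples. Take a family $\mathcal{T}$ of $\Omega(n^2)$ triples from $X\times Y\times Z$ such that no two triples share a pair. For example, identify $X,Y,Z$ with $\mathbb{Z}_m$ for $m\approx n/3$ and take the triples $(x,\,x+t,\,x+2t)$, or more simply all $(x,y,z)$ with $z\equiv x+y \pmod m$, a Latin-square family. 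In that family, any two of the three coordinates determine the third, so the triples pairwise share at most one vertex. Hence their edge sets are disjoint and there are $m^2=\Omega(n^2)$ of them.

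For each triple in $\mathcal{T}$, the event ``blue, blue, red'' in the required positions has probability $p^2(1-p)=1/8$. These events are independent because the pair sets are disjoint. Therefore
\[
\mathbb{P}(A\text{ bad})\le (7/8)^{m^2}=2^{-c_1n^2}.
\]
Choosing $c_0<c_1$, with adjustments for the floor and for small $n\ge5$ (where $N$ may be tiny and the claim is vacuous or checked via the constant), finishes the proof.

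The only real point is the construction of the linear (pairwise edge-disjoint) triple system of quadratic size inside an ordered $n$-set, respecting the order $x<y<z$. The tripartite Latin-square construction handles this cleanly, so I expect no serious obstacle. The main care is in the constants, and in making sure that $N\ge n$ when the statement is non-vacuous.
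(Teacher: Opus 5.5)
Your proposal is correct. It is essentially the standard probabilistic argument that the paper omits and defers to its earlier Lemma~3.1: a uniform random colouring, $\Omega(n^2)$ pairwise pair-disjoint ordered triples $x<y<z$ inside each $n$-set (hence failure probability $(7/8)^{\Omega(n^2)}$), and a union bound over at most $\binom{N}{n}\le 2^{c_0n^2}$ sets. One small caveat: your first suggested family $(x,x+t,x+2t)$ is pair-disjoint only when $m$ is odd, but the Latin-square family $z\equiv x+y \pmod m$ that you actually use works for every $m$, and with $m=\lfloor n/3\rfloor$ any $c_0<\log_2(8/7)/25$ covers all $n\ge 5$, since then $m^2\ge (n-2)^2/9\ge n^2/25$.
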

\medskip

Let $c_0>0$ be the constant from Lemma \ref{phi}, and let $U=\{0,1,\ldots,\lfloor2^{c_0n}\rfloor-1\}$ and $\phi:\binom{U}{2}\to \{\text{red},\text{blue}\}$ be a $2$-coloring of the pairs of $U$ satisfying the property given in the lemma. Now, let $N=2^{\lfloor2^{c_0n}\rfloor}$ and $V(H)=\{0,1,\ldots,N-1\}$. Then we shall use the coloring
$\phi$ to produce a $K^{(4)}_6$-free $4$-graph $H$ on $V(H)$ with $\alpha(H)<4n-5$ as follows. For any $4$-tuple $e=\langle v_1,v_2,v_3,v_4 \rangle$ of $V(H)$, set $e\in E(H)$ if and only if one of the following holds:
\begin{enumerate}
\item[\textbf{(i)}] $\delta(e)$ is monotone, $\phi(\delta_1,\delta_2)=\phi(\delta_2,\delta_3)=\text{blue}$, and $\phi(\delta_1,\delta_3)=\text{red}$; 
\item[\textbf{(ii)}] $\delta_1>\delta_2<\delta_3$ and $\phi(\delta_1,\delta_2)=\phi(\delta_2,\delta_3)=\text{blue}$;
\item[\textbf{(iii)}] $\delta_1<\delta_2>\delta_3$ and at least one of $\phi(\delta_1,\delta_2)$ and $\phi(\delta_2,\delta_3)$ is red.
\end{enumerate}

\subsection{$H$ is $K_6^{(4)}$-free}

In this subsection, we show that $H$ is $K_6^{(4)}$-free. To see this, suppose to the contrary that $P = \langle v_1, \ldots, v_6 \rangle$ induces a $K_6^{(4)}$ in $H$. This will lead to a contradiction. Recall that $\delta(P)=(\delta_1,\delta_2,\delta_3,\delta_4,\delta_5)$. Let $\delta_k$ denote the unique largest element in $\delta(P)$. The uniqueness of $\delta_{k}$ follows from Property III.

If $k\in\{1,2\}$, we consider $\delta(v_k,v_3,v_4,v_5,v_6)=(\delta_k>\delta_3,\delta_4,\delta_5)$. Since $P$ induces a $K_6^{(4)}$ in $H$, the 4-tuples $(v_k,v_3,v_4,v_5)$, $(v_k,v_4,v_5,v_6)$ and $(v_3,v_4,v_5,v_6)$ are edges of $H$. From the edges $(v_k,v_3,v_4,v_5)$ and $(v_k,v_4,v_5,v_6)$, together with $\delta_k>\max\{\delta_3,\delta_4\}$ and $\delta_k>\max\{\delta_4,\delta_5\}$ and \textbf{(i)}--\textbf{(ii)}, it follows that $\phi(\delta_k,\delta_3)=\phi(\delta_k,\delta_4)=\text{blue}$. If $\delta_3>\delta_4$, then \textbf{(i)} implies $\phi(\delta_k,\delta_4)=\text{red}$, a contradiction. Thus $\delta_3<\delta_4$ by Property I. Moreover, applying \textbf{(ii)} to $(v_k,v_3,v_4,v_5)$ yields $\phi(\delta_3,\delta_4)=\text{blue}$. If $\delta_4>\delta_5$, then $\delta(v_3,v_4,v_5,v_6)=(\delta_3<\delta_4>\delta_5)$. It follows from $\phi(\delta_3,\delta_4)=\text{blue}$ and \textbf{(iii)} that $\phi(\delta_4,\delta_5)=\text{red}$, which implies that $(v_k,v_4,v_5,v_6)$ is not an edge by \textbf{(i)}, a contradiction. Therefore $\delta_3<\delta_4<\delta_5$. Applying \textbf{(i)} to $(v_3,v_4,v_5,v_6)$ gives $\phi(\delta_3,\delta_5)=\text{red}$. This, together with $\delta(v_k,v_3,v_4,v_6)=(\delta_k>\delta_3<\delta_5)$, implies by \textbf{(ii)} that $(v_k,v_3,v_4,v_6)$ is not an edge, again a contradiction. The same argument also gives a contradiction when $k\in\{4,5\}$.

It remains to check $k=3$. Since $(v_1,v_2,v_3,v_4)$ and $(v_3,v_4,v_5,v_6)$ are edges of $H$, and $\delta_3>\delta_2$, it follows from \textbf{(i)} and \textbf{(ii)} that $\phi(\delta_2,\delta_3)=\text{blue}$. Likewise, \textbf{(i)}--\textbf{(ii)} implies that $\phi(\delta_3,\delta_4)=\text{blue}$. However, by \textbf{(iii)}, $(v_2,v_3,v_4,v_5)$ is not an edge, a contradiction.

This completes the proof that $H$ is $K_6^{(4)}$-free.

\subsection{$\alpha(H)<4n-5$}
Now we show that $\alpha(H)<4n-5$. Suppose that there is a set $Q=\langle v_1,v_2,\ldots ,v_m\rangle$ of $m$ vertices that induces an independent set in $H$. Recall that $\delta_i=\delta(v_i,v_{i+1})$, and hence $\delta(Q)=(\delta_{i})_{i=1}^{m-1}$. 

\begin{lemma}\label{no-mono-n}
There is no monotone subsequence $(\delta_{i_\ell})_{\ell=1}^n\subset (\delta_j)_{j=1}^{m-1}$ such that for any $a,b,c \in [n]$ with $a<b<c$, there exists $\{u_1,\ldots,u_4\}\subset\{v_1,\ldots,v_m\}$ such that $\delta(u_1,\ldots,u_4)=(\delta_{i_a},\delta_{i_b},\delta_{i_c})$.
\end{lemma}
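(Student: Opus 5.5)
Suppose for contradiction that such a monotone subsequence $(\delta_{i_\ell})_{\ell=1}^n$ exists. The plan is to apply Lemma~\ref{phi} to the $n$-set formed by its values and derive a contradiction with the independence of $Q$.

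Set $A=\{\delta_{i_1},\ldots,\delta_{i_n}\}\subset\{0,1,\ldots,D-1\}$, where here $D=\lfloor 2^{c_0n}\rfloor$. By the monotonicity of the subsequence, together with Property I (consecutive values differ) and the fact that a monotone sequence is strictly monotone, these $n$ values are pairwise distinct. Hence $A$ is an $n$-subset of $U=\{0,1,\ldots,\lfloor 2^{c_0n}\rfloor-1\}$, since every $\delta$-value is a bit index of a vertex of $V(H)=\{0,\ldots,N-1\}$ with $N=2^{\lfloor 2^{c_0n}\rfloor}$.

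By Lemma~\ref{phi}, $A$ contains a triple $a'<b'<c'$ (as integers) with $\phi(a',b')=\phi(b',c')=$ blue and $\phi(a',c')=$ red. These three values correspond to indices $a<b<c$ in $[n]$, and the orientation splits into two cases.
\begin{itemize}
\item If the subsequence is increasing, then $(\delta_{i_a},\delta_{i_b},\delta_{i_c})=(a',b',c')$.
\item If it is decreasing, then $(\delta_{i_a},\delta_{i_b},\delta_{i_c})=(c',b',a')$.
\end{itemize}
Since $\phi$ is a coloring of unordered pairs, in both cases we get
\[
\phi(\delta_{i_a},\delta_{i_b})=\phi(\delta_{i_b},\delta_{i_c})=\text{blue},\qquad \phi(\delta_{i_a},\delta_{i_c})=\text{red}.
\]

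By the hypothesis of the statement, there are vertices $u_1<u_2<u_3<u_4$ in $Q$ with $\delta(u_1,\ldots,u_4)=(\delta_{i_a},\delta_{i_b},\delta_{i_c})$. This $\delta$-sequence is monotone, and its colors are exactly those required by rule \textbf{(i)}. So $\langle u_1,\ldots,u_4\rangle\in E(H)$, contradicting that $Q$ is independent.

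The only point needing care is that the values in $A$ really lie in $U$ and are distinct, so that Lemma~\ref{phi} applies. This holds because the definitions give $D=\lfloor 2^{c_0n}\rfloor$, and $\phi$ is symmetric under the reversal in the decreasing case. No genuine obstacle is expected.
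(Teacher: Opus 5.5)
Your proof is correct and is essentially the paper's argument: you apply Lemma~\ref{phi} to the $n$ values of the subsequence, and the hypothesized 4-tuple together with rule \textbf{(i)} then contradicts independence; your use of the symmetry of $\phi$ spells out the decreasing case that the paper only calls ``similar.'' One small remark: distinctness of the values follows directly from the strict definition of monotone, so the appeal to Property~I (which only concerns consecutive entries of the original sequence) is unnecessary.
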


\noindent\emph{Proof of Lemma \ref{no-mono-n}.~}
We present the argument for the increasing case. The decreasing case is similar. Suppose to the contrary that $(\delta_{i_\ell})_{\ell=1}^n$ is such a monotone increasing subsequence. It follows from Lemma \ref{phi} that there is a $3$-tuple $\delta_{i_a},\delta_{i_b},\delta_{i_c}$ with $\delta_{i_a}<\delta_{i_b}<\delta_{i_c}$ such that $$\phi(\delta_{i_a},\delta_{i_b})=\phi(\delta_{i_b},\delta_{i_c})=\text{blue}~\text{and}~\phi(\delta_{i_a},\delta_{i_c})=\text{red}.$$
Since there exists $\{u_1,\ldots,u_4\}\subset \{v_1,\ldots,v_m\}$ such that $\delta(u_1,\ldots,u_4)=(\delta_{i_a},\delta_{i_b},\delta_{i_c})$ from the assumption, $\{u_1,\ldots,u_4\}$ forms an edge by \textbf{(i)}, a contradiction.\hfill$\Box$
\medskip

We claim that there is at most one local maximum in the sequence $\delta(Q)$. Suppose, to the contrary, that there are at least two local maxima. Then we can choose two consecutive local maxima, say $\delta_i$ and $\delta_j$ with $2\le i<j\le m-2$, and there is exactly one local minimum between them, say at some index $\ell$ with $i<\ell<j$. By Property II, we obtain that
\[
\delta(v_{i-1},v_i,v_\ell,v_{\ell+1},v_{j+1},v_{j+2})=(\delta_{i-1}<\delta_i>\delta_\ell<\delta_j>\delta_{j+1}).
\]
Since $(v_{i-1},v_i,v_\ell,v_{\ell+1})$ and $(v_\ell,v_{\ell+1},v_{j+1},v_{j+2})$ are nonedges in $H$, it follows from \textbf{(iii)} that $\phi(\delta_i,\delta_\ell)=\phi(\delta_\ell,\delta_j)=\text{blue}$. But then \textbf{(ii)} implies that $(v_i,v_\ell,v_{\ell+1},v_{j+1})$ is an edge in $H$, a contradiction. Therefore, there is at most one local maximum in $\delta(Q)$.

Since $\delta(Q)$ has at most one local maximum and, by Lemma \ref{no-mono-n}, contains no monotone consecutive subsequence of length $n$, we have $m-1\le 4(n-1)-3$. Hence $m\le 4n-6$.
This completes the proof.
\medskip

Finally, we present the proof of Theorem \ref{center-2}.

\noindent\textit{Proof of Theorem \ref{center-2}.~} Combining the two preceding subsections, we have constructed a $4$-graph $H$ on 
$N=2^{\lfloor 2^{c_0n}\rfloor}$ vertices such that $H$ is $K_6^{(4)}$-free and 
$\alpha(H)<4n-5$. Therefore 
$r_4(6,4n-5)>2^{\lfloor 2^{c_0n}\rfloor}$. Recall that $c_0>0$ is a sufficiently small absolute constant. Consequently, there exists an absolute constant $c>0$ 
such that $r_4(6,n)\ge 2^{2^{cn}}$ for all $n\ge 5$. This completes the proof of 
Theorem~\ref{center-2}. \hfill$\Box$
\medskip


\section*{Declaration on the use of AI}
The authors acknowledge the use of AI tools. We used ChatGPT to discuss constructions. Specifically, ChatGPT enabled us to adjust how the edges are defined in the constructed \(4\)-graph, thereby simplifying the analysis of local extrema in the associated \(\delta\)-sequence, whereas our previous approach relied on a multi-layer analysis of local maxima. Along these lines, we find that several related constructions yield the same lower bound. In this paper, we adopt a simple construction and omit the others, including the original construction generated by ChatGPT. All mathematical arguments and proofs in the manuscript were written and checked by the authors.

\end{spacing}
\end{document}